\definecolor{linkblue}{RGB}{1,1,190}
\definecolor{citered}{RGB}{190,1,1}
\theoremstyle{plain}
\newtheorem{theorem}{\bf Theorem}[section]
\newtheorem{proposition}[theorem]{\bf Proposition}
\newtheorem{lemma}[theorem]{\bf Lemma}
\theoremstyle{definition}
\newtheorem{example}[theorem]{\bf Example}
\newtheorem{definition}[theorem]{\bf Definition}
\newtheorem{conjecture}[theorem]{\bf Conjecture}
\numberwithin{equation}{section}
\makeatletter\@namedef{subjclassname@2020}{\textup{2020} Mathematics Subject Classification}\makeatother
\begin{document}
\title[A counterexample]{A counterexample to the Pellian equation conjecture of Mordell}
\author{Andreas Reinhart}
\address{Institut f\"ur Mathematik und Wissenschaftliches Rechnen, Karl-Franzens-Universit\"at Graz, NAWI Graz, Heinrichstra{\ss}e 36, 8010 Graz, Austria}
\email{andreas.reinhart@uni-graz.at}
\keywords{fundamental unit, Pell equation, quadratic number field}
\subjclass[2020]{11R11, 11R27}
\thanks{This work was supported by the Austrian Science Fund FWF, Project Number P36852-N}

\begin{abstract}
Let $d\geq 2$ be a squarefree integer, let $\omega\in\{\sqrt{d},\frac{1+\sqrt{d}}{2}\}$ be such that $\mathbb{Z}[\omega]$ is the ring of algebraic integers of the real quadratic number field $\mathbb{Q}(\sqrt{d})$, let $\varepsilon>1$ be the fundamental unit of $\mathbb{Z}[\omega]$ and let $x$ and $y$ be the unique nonnegative integers with $\varepsilon=x+y\omega$. In this note, we extend and study the list of known squarefree integers $d\geq 2$, for which $y$ is divisible by $d$ (cf. OEIS A135735). As a byproduct, we present a counterexample to a conjecture of L. J. Mordell.
\end{abstract}

\maketitle

\section{Introduction, conjectures and terminology}\label{1}

Let $\mathbb{P}$, $\mathbb{N}$, $\mathbb{N}_0$, $\mathbb{Z}$, $\mathbb{Q}$ denote the sets of prime numbers, positive integers, nonnegative integers, integers and rational numbers, respectively. Let $f\in\mathbb{N}$. We say that $f$ is {\it squarefree} if $p^2\nmid f$ for each $p\in\mathbb{P}$. Moreover, $f$ is called {\it powerful} (also called {\it squareful}\,) if for each $p\in\mathbb{P}$ with $p\mid f$, we have $p^2\mid f$. Observe that $f$ is powerful if and only if $f=a^2b^3$ for some $a,b\in\mathbb{N}$.

\smallskip
Let $d\in\mathbb{N}_{\geq 2}$ be squarefree, let $K=\mathbb{Q}(\sqrt{d})$ and let $\mathcal{O}_K$ be the ring of algebraic integers of $K$. We set

\smallskip
\[
\omega=\begin{cases}\sqrt{d}&\textnormal{if }d\equiv 2,3\mod 4,\\\frac{1+\sqrt{d}}{2}&\textnormal{if }d\equiv 1\mod 4,\end{cases}\quad\quad\mathsf{d}_K=\begin{cases}
4d&\textnormal{if }d\equiv 2,3\mod 4,\\ d&\textnormal{if }d\equiv 1\mod 4.\end{cases}
\]

\bigskip
It is well-known that $\mathcal{O}_K=\mathbb{Z}[\omega]=\mathbb{Z}\oplus\omega\mathbb{Z}$. Let $\varepsilon\in\mathcal{O}_K$ be the (unique) fundamental unit with $\varepsilon>1$ (i.e., $\{\pm\varepsilon^k:k\in\mathbb{Z}\}$ is the unit group of $\mathcal{O}_K$). Observe that there always exist unique $x,y\in\mathbb{N}_0$ such that $\varepsilon=x+y\omega$, and if $d\not=5$, then $x,y\in\mathbb{N}$. From now on, we will use $x$ and $y$ as defined here.

\smallskip
So far, there are $17$ known squarefree integers $d\in\mathbb{N}_{\geq 2}$ with $d\mid y$ (see \cite[Remark 5.5]{Re23} or OEIS A135735). In this note, we extend the list of known squarefree integers $d\in\mathbb{N}_{\geq 2}$ with $d\mid y$ to $21$ members in total. One of the newly found numbers happens to be a counterexample to the Pellian equation conjecture of Mordell. For the readers' convenience, we include the complete list here:

\begin{align}
&46,430,1817,58254,209991,1752299,3124318,4099215,5374184665,6459560882,16466394154,\label{List 1.1}\tag{L1}\\
&20565608894,25666082990,117477414815,125854178626,1004569189366,1188580642033,\nonumber\\
&15826129757609,18803675974841,20256129307923,39028039587479\nonumber
\end{align}

\bigskip
For more details, we refer to Section~\ref{3} of this note (which contains several tables that summarize the properties of these numbers).

\smallskip
Next, we want to discuss the importance of the squarefree integers $d\in\mathbb{N}_{\geq 2}$ for which $d\mid y$. Indeed, there are several conjectures and results that are tied to these numbers. In what follows, we present these conjectures and results and restate them by using the aforementioned terminology.

\begin{conjecture}[The conjecture of Ankeny, Artin and Chowla or (AAC)-conjecture]\label{Conjecture 1.1} If $d\in\mathbb{P}$ and $d\equiv 1\mod 4$, then $d\nmid y$.
\end{conjecture}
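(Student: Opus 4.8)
The statement is precisely the Ankeny--Artin--Chowla conjecture, a well-known open problem, so the honest answer is that I do not expect to prove it outright; what I can lay out is the standard line of attack and the exact point at which it stalls. The plan is first to convert the divisibility $d\mid y$ into a congruence that is analytic rather than Diophantine. For a prime $p\equiv 1\pmod 4$ the fundamental unit has norm $-1$, so writing $\varepsilon=x+y\omega$ with $\omega=\frac{1+\sqrt{d}}{2}$ and clearing denominators gives $\varepsilon=\frac{t+u\sqrt{p}}{2}$ with $t=2x+y$ and $u=y$, where $t^2-pu^2=-4$; hence $p\mid y\iff p\mid u$. The classical computation of Ankeny, Artin and Chowla expresses $u/t\bmod p$ in terms of the Bernoulli number $B_{(p-1)/2}$ (equivalently, a Fermat-quotient or harmonic-sum expression $\sum_{k=1}^{(p-1)/2}\frac1k$), and yields the equivalence $p\mid y\iff p\mid B_{(p-1)/2}$. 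This first step turns the conjecture into the assertion that $B_{(p-1)/2}\not\equiv 0\pmod p$ for every prime $p\equiv 1\pmod 4$.

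Second, I would recast this nonvanishing $p$-adically. By Leopoldt's $p$-adic class number formula for the quadratic character $\chi$ of $K=\mathbb{Q}(\sqrt{p})$, the congruence $p\mid B_{(p-1)/2}$ is equivalent to the $p$-adic regulator of $\mathbb{Z}[\omega]$ (essentially the $p$-adic logarithm of the fundamental unit $\varepsilon$) failing to be a $p$-adic unit, and also to the vanishing modulo $p$ of the Kubota--Leopoldt $p$-adic $L$-value attached to $\chi$. The target then becomes: show that this $p$-adic regulator is always a unit, i.e.\ that $p$ is never ``exceptional'' for $\mathbb{Q}(\sqrt{p})$ in this sense.

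The main obstacle --- and the reason the statement remains a conjecture rather than a theorem --- is exactly this last nonvanishing. There is no known unconditional mechanism forcing $B_{(p-1)/2}$, or the associated $p$-adic regulator, to be prime to $p$; the situation runs parallel to Kummer's irregular primes and to Vandiver's conjecture, where sporadic $p$-divisibilities genuinely occur and are not excluded by present methods. Worse, if one models $B_{(p-1)/2}\bmod p$ as uniformly distributed, the probability that $p\mid y$ is about $1/p$, and since $\sum_p\frac1p$ diverges the heuristic actually predicts infinitely many exceptions (of density zero). Thus the naive probabilistic model does not even support the conjecture, which simultaneously explains why no proof is available and why any counterexample would be astronomically rare --- consistent with the extensive computations that find $p\nmid y$ for all primes $p\equiv 1\pmod 4$ up to very large bounds. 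Absent a genuinely new way to control these $p$-adic invariants uniformly in $p$, I would not expect the reduction above to close; in this note the conjecture is recorded only to motivate the study of the composite squarefree $d$ with $d\mid y$.
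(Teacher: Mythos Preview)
The statement is labeled \emph{Conjecture} in the paper, and the paper provides no proof of it; on the contrary, the paper explicitly states near the end of Section~3 that ``(to the best of our knowledge) the (AAC)-conjecture and the (EMW)-conjecture are still open.'' There is therefore no ``paper's own proof'' to compare against.

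Your write-up correctly recognizes this: you identify the statement as the open Ankeny--Artin--Chowla conjecture, sketch the classical reduction to $p\mid B_{(p-1)/2}$ and to the $p$-adic regulator, and explain why the argument stalls. That contextual discussion is accurate and appropriate, and it goes well beyond what the paper itself offers (the paper merely records the conjecture to motivate the search for squarefree $d$ with $d\mid y$). So there is no gap to point to in your proposal and no alternative approach in the paper to contrast with; the honest conclusion --- that no proof is available --- matches the paper exactly.
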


The (AAC)-conjecture was first mentioned by N. C. Ankeny, E. Artin and S. Chowla in 1952 (see \cite[p. 480]{An-Ar-Ch52}) and has subsequently been studied by various authors. For instance, L. J. Mordell provided a characterization of the conjecture (if $d\equiv 5\mod 8$) that involves Bernoulli numbers \cite{Mo60,Mo61}. Also note that the conjecture plays some role in the study of direct-sum cancellation for modules over orders in real quadratic number fields \cite{Ha07}. For more recent work involving the (AAC)-conjecture, we refer to \cite{Be-Mo24,Si-Sh24}. Before \cite{Re23} was published, the conjecture has been verified (for all primes $d\equiv 1\mod 4$) up to $2\cdot 10^{11}$ (see \cite{Va-Te-Wi01,Va-Te-Wi03}). In \cite{Re23} the conjecture has been verified up to $1.5\cdot 10^{12}$ (but this was not stated explicitly).

\begin{conjecture}[The Pellian equation conjecture of Mordell]\label{Conjecture 1.2} If $d\in\mathbb{P}$ and $d\equiv 3\mod 4$, then $d\nmid y$.
\end{conjecture}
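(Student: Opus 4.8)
Since the title already announces a \emph{counterexample}, the content to establish is that Conjecture~\ref{Conjecture 1.2} is \emph{false}: the plan is not to prove it but to exhibit and certify one prime $d\equiv 3\bmod 4$ with $d\mid y$. My first move is to replace the divisibility $d\mid y$ by a criterion that can actually be tested. For $d\equiv 3\bmod 4$ we have $\omega=\sqrt d$ and $\varepsilon=x+y\sqrt d$ with $x^2-dy^2=\pm 1$, and $d$ is ramified, $(d)=(\sqrt d)^2$. Reducing modulo $d$ yields $\mathcal{O}_K/d\mathcal{O}_K\cong\mathbb{F}_d[t]/(t^2)$ (dual numbers over $\mathbb{F}_d$), under which $\varepsilon\mapsto x+yt$ with $x\neq 0$. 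Writing $\varepsilon=x\bigl(1+(y/x)t\bigr)$ and using $(1+ct)^{k}=1+kct$ in $\mathbb{F}_d[t]/(t^2)$, one computes $\varepsilon^{\,d-1}\equiv 1-(y/x)t\pmod{d}$, so that
\[
d\mid y\quad\Longleftrightarrow\quad \varepsilon^{\,d-1}\equiv 1\pmod{d\mathcal{O}_K}.
\]
Equivalently, $d\mid y$ says exactly that $\varepsilon$ already lies in the order $\mathbb{Z}[\sqrt{d^3}]=\mathbb{Z}+d\sqrt d\,\mathbb{Z}$ of conductor $d$, i.e.\ $(x,y/d)$ is the fundamental solution of $X^2-d^3Z^2=\pm 1$. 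This is the Wieferich/Ankeny--Artin--Chowla-type reformulation that makes a search conceivable, and it ties the problem to the powerful number $d^3$, consistent with the emphasis on powerful numbers in the introduction.

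Before searching I would calibrate how rare such $d$ are. Modelling the nilpotent component $y/x\bmod d$ as equidistributed in $\mathbb{F}_d$ suggests $d\mid y$ holds with ``probability'' $\approx 1/d$, so the expected number of squarefree $d\le N$ with $d\mid y$ is $\approx\sum_{d\le N}1/d\approx\log N$. For $N\approx 4\cdot 10^{13}$ this predicts examples on the order of $30$, consistent with the $21$ members of \eqref{List 1.1}; among them only a thin subset can simultaneously be prime and $\equiv 3\bmod 4$, which is precisely why a refutation of Conjecture~\ref{Conjecture 1.2} surfaces only after pushing the search well past $10^{13}$.

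For the search itself I would test candidates one $d$ at a time \emph{without} ever forming the astronomically large $\varepsilon$: run the continued fraction expansion of $\sqrt d$ with all convergents $p_n,q_n$ reduced modulo $d$ through one full period to read off $(x\bmod d,\,y\bmod d)$, then check $y\equiv 0$; equivalently, once $\varepsilon\bmod d\mathcal{O}_K$ is known, evaluate $\varepsilon^{\,d-1}\bmod d\mathcal{O}_K$ by fast exponentiation in $\mathbb{F}_d[t]/(t^2)$. This is the per-candidate cost that drives the exhaustive sweep already carried out in \cite{Re23}; here it must be pushed to a much larger bound, ideally narrowing the candidate pool through the conductor-$d$ (powerful-number) reformulation rather than by testing literally every $d$. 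For the winning $d$ I would then supply a genuine primality certificate and confirm $d\equiv 3\bmod 4$, which together with $d\mid y$ contradicts the conjecture.

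The main obstacle is the search, not the verification. A full sweep has aggregate cost $\sum_{d\le N}\widetilde{O}(\sqrt d)=\widetilde{O}(N^{3/2})$, which at $N\approx 4\cdot 10^{13}$ is enormous, so everything hinges on (i) a generation or restriction mechanism that produces the relevant $d$ far more cheaply than a unit-by-unit sweep, and (ii) enough arithmetic efficiency that, even for the largest candidates, a full continued-fraction period of length $O(\sqrt d\log d)$ can be traversed entirely in modular arithmetic. Once a single prime $d\equiv 3\bmod 4$ with $d\mid y$ is exhibited and certified, the refutation of Conjecture~\ref{Conjecture 1.2} is immediate.
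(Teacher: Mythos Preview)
Your overall plan is the same as the paper's: the conjecture is refuted, not proved, by exhibiting a single prime $d\equiv 3\pmod 4$ with $d\mid y$, found by a large-scale computer search and then independently certified. Your heuristic count ($\approx\log N$ hits up to $N$) and your observation that only a thin subset can be prime and $\equiv 3\pmod 4$ match exactly what the paper observes, and the paper's counterexample is $d=39028039587479$ (Example~\ref{Example 3.1}).

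Where you diverge from the paper is in how the computational bottleneck is handled. Your reformulation $d\mid y\Leftrightarrow\varepsilon^{d-1}\equiv 1\pmod{d\mathcal{O}_K}$ and the conductor-$d$ / $X^2-d^3Z^2=1$ rewriting are correct, but they do not reduce work: you still need $\varepsilon\bmod d$ first, and no ``restriction mechanism'' on candidates is known that would let you avoid an exhaustive sweep. The paper does sweep every squarefree $d$ in the interval; what makes this feasible is not a restriction of candidates but the \emph{large step algorithm} of Stephens and Williams~\cite{St-Wi88}, which uses the infrastructure (baby-step/giant-step on the principal cycle) to obtain $Y\bmod d$ in time roughly $\widetilde{O}(d^{1/4})$ per $d$ rather than the $\widetilde{O}(d^{1/2})$ of walking the full continued-fraction period. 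Your ``continued fraction reduced modulo $d$'' idea is precisely the \emph{small step algorithm} of the same reference, which the paper uses only for independent verification of the hits, not for the search itself. So your proposal has the right architecture but the wrong engine for the search phase; replacing the hoped-for candidate filter by the large-step algorithm is what actually closes the gap.
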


This conjecture was first formulated by A. A. Kiselev and I. Sh. Slavutski\u{\i} in 1959 (see \cite{Ki-Sl59}) and stated independently by L. J. Mordell in 1961 (see \cite[p. 283]{Mo61}) who also established a connection of this conjecture with Euler numbers. The conjecture of Mordell has recently been studied in a series of papers \cite{Be-Mo24,Ch-Sa19,Si-Sh24} and has been verified (for all primes $d\equiv 3\mod 4$) up to $1.6\cdot 10^9$ in \cite{Be-Mo24}. Around the same time, the Mordell conjecture has (independently) been verified up to $1.5\cdot 10^{12}$ in \cite{Re23}. In Section~\ref{3} we provide a counterexample to this conjecture (Example~\ref{Example 3.1}).

\begin{conjecture}[The conjecture of Erd\"os, Mollin and Walsh or (EMW)-conjecture]\label{Conjecture 1.3} For each $a\in\mathbb{N}$, there is some $b\in\{a,a+1,a+2\}$ such that $b$ is not powerful (i.e., there are no three consecutive powerful numbers).
\end{conjecture}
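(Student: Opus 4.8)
The plan is to recast the nonexistence of three consecutive powerful numbers as a single condition and then to tie it to the Pell-equation quantities that are the subject of this note. Suppose $N-1,N,N+1$ were three consecutive powerful numbers, with $N$ the middle term. First I would settle the parity: if $N$ were odd, then $N-1$ and $N+1$ would be two even powerful numbers, hence both divisible by $4$ (a powerful even number is divisible by $4$, since $2\mid n$ forces $2^2\mid n$), which is impossible for two integers differing by $2$. Thus $N$ is even and $N-1,N+1$ are odd and coprime. Since a product of two coprime integers is powerful exactly when each factor is powerful, the powerfulness of $N-1$ and $N+1$ together is equivalent to the powerfulness of the single number $N^2-1=(N-1)(N+1)$. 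Hence the (EMW)-conjecture reduces to the assertion
\[
\textnormal{there is no even powerful } N\in\mathbb{N} \textnormal{ for which } N^2-1 \textnormal{ is also powerful.}
\]

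Next I would bring in the Pell structure. Writing $N^2-1=Du^2$ with $D$ squarefree, the number $N^2-1$ is powerful if and only if $D\mid u$, and $(N,u)$ is then a solution of $X^2-DY^2=1$ with $D\mid Y$. This is exactly the divisibility phenomenon ($d\mid y$) studied throughout this note, so a triple of consecutive powerful numbers would force a solution of a Pell equation whose $Y$-coordinate is divisible by the squarefree radicand, together with the extra demand that the accompanying $X$-coordinate $N$ be powerful. The scarcity of squarefree $d$ with $d\mid y$ recorded in \eqref{List 1.1} is precisely the kind of evidence one would marshal in favour of the conjecture: such $D$ appear to be extremely rare, and a triple requires, on top of this rarity, the independent multiplicative constraint that $N$ itself be powerful.

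The strategy from here would be to show that the two multiplicative conditions ``$N^2-1$ powerful'' and ``$N$ powerful'' can never hold together. One would try to exploit the rigidity of the solutions of $X^2-DY^2=1$: they are the powers of a fundamental solution, so the divisibility $D\mid Y$ propagates through the recursion in a controlled way, and one could hope to extract a congruence or size obstruction preventing $N=X_k$ from being powerful.

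The hard part — and the reason this stays a conjecture — is exactly this last step. Requiring the two shifted values $N$ and $N^2-1$ (equivalently $N-1,N,N+1$) to be simultaneously powerful is a Pillai/$abc$-type problem about near-coincidences of perfect-power patterns at integers within a bounded distance, and no unconditional method is known that excludes all such coincidences. Applying the $abc$-conjecture to the relation $N^2=(N^2-1)+1$, where $\mathrm{rad}(N^2)\le\sqrt{N}$ and $\mathrm{rad}(N^2-1)<N$ because $N$ and $N^2-1$ are powerful, one does get $N^2\le C_\epsilon N^{3/2+\epsilon}$ and hence finiteness of such $N$; but that falls short of the sharp ``none at all'' asserted here, and unconditionally the question is open. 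I would therefore expect any genuine attempt to stall exactly at the point of controlling the powerfulness of $N$ and of $N^2-1$ at once; the reduction above isolates this obstacle cleanly but does not remove it.
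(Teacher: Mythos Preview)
There is nothing to compare against: Conjecture~1.3 is stated in the paper as an \emph{open conjecture}, not as a theorem, and the paper explicitly remarks near the end that ``the (AAC)-conjecture and the (EMW)-conjecture are still open.'' So the paper contains no proof of this statement, and your write-up is likewise not a proof but (as you yourself say) a reduction that isolates the obstruction without removing it.

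Your reduction is correct and is essentially the Mollin--Walsh equivalence the paper quotes (see the discussion around Definition~1.4 and the reference \cite{Mo-Wa86}). A couple of details would sharpen the match with the paper's condition~(C). From $N$ even and powerful you get $4\mid N$, hence $Du^2=N^2-1\equiv -1\pmod 8$; since $u$ is odd this forces $D\equiv 7\pmod 8$, which is the congruence condition built into~(C). Moreover the Pell solution $(N,u)$ equals $\varepsilon^k$ for some $k\in\mathbb{N}$, and one checks (using $x^2-dy^2=1$ with $d\equiv 7\pmod 8$) that $u_k$ is even and $v_k$ is odd precisely when $k$ is odd; together with $N$ powerful and $D\mid u$ this is exactly~(C). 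Conversely any $d$ satisfying~(C) produces such a triple. So your ``no even powerful $N$ with $N^2-1$ powerful'' is equivalent to ``no squarefree $d$ satisfies~(C)'', which is the form the paper uses.

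The honest conclusion you reach---that the simultaneous powerfulness of $N$ and $N^2-1$ is an $abc$-type obstruction with no known unconditional resolution---is exactly the state of the art, and matches the paper's stance. There is no gap to name beyond the one you already name; the statement remains a conjecture.
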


The (EMW)-conjecture was first mentioned in a paper of P. Erd\"os \cite{Er75} and has subsequently been rediscovered by R. A. Mollin and P. G. Walsh \cite{Mo-Wa86} who also provided a characterization of the conjecture in terms of fundamental units \cite{Mo87,Mo-Wa86}. This conjecture has wide implications (if it is true), like the existence of infinitely many primes that are not Wieferich primes \cite{Gr86}.

\medskip
Now we want to discuss various results that involve the squarefree integers $d\in\mathbb{N}_{\geq 2}$ with $d\mid y$. To do so, we need some more terminology. For $s,r,t\in\mathbb{N}_0$, let $[r,s]=\{z\in\mathbb{N}_0:r\leq z\leq s\}$ and $\mathbb{N}_{\geq t}=\{z\in\mathbb{N}_0:z\geq t\}$. Let ${\rm N}:K\rightarrow\mathbb{Q}$ defined by ${\rm N}(a+b\sqrt{d})=a^2-db^2$ for all $a,b\in\mathbb{Q}$ be the norm map on $K$. We call a subring $\mathcal{O}$ of $K$ with quotient field $K$ an {\it order} in $K$ if it is a finitely generated $\mathbb{Z}$-module. For each $f\in\mathbb{N}$, let $\mathcal{O}_f=\mathbb{Z}+f\mathcal{O}_K$ and note that $\mathcal{O}_f$ is the unique order in $K$ with conductor $f$ (i.e., $\{z\in\mathcal{O}_f:z\mathcal{O}_K\subseteq\mathcal{O}_f\}=f\mathcal{O}_K$). Let ${\rm Pic}(\mathcal{O})$ be the Picard group of $\mathcal{O}$ for each order $\mathcal{O}$ in $K$. We let ${\rm h}(d)=|{\rm Pic}(\mathcal{O}_K)|$ denote the class number of $K$. For all $a,b\in\mathbb{Z}$, let $\pmb{\Big(}\frac{a}{b}\pmb{\Big)}\in\{-1,0,1\}$ denote the Kronecker symbol of $a$ modulo $b$. If $p\in\mathbb{P}$, then $p$ is called {\it inert}, {\it ramified}, {\it split} (in $\mathcal{O}_K$) if $\pmb{\Big(}\frac{\mathsf{d}_K}{p}\pmb{\Big)}=-1$, $\pmb{\Big(}\frac{\mathsf{d}_K}{p}\pmb{\Big)}=0$, $\pmb{\Big(}\frac{\mathsf{d}_K}{p}\pmb{\Big)}=1$, respectively. We will use well-known properties of the Kronecker symbol (like the quadratic reciprocity law) throughout this note without further mention.

\begin{definition}[The conditions (C) and (SC)]\label{Definition 1.4} Recall how $x$ and $y$ were defined above. We say that {\it $d$ induces a counterexample to the \textnormal{(EMW)}-conjecture} (or {\it $d$ satisfies \textnormal{(C)}} for short) if $d\equiv 7\mod 8$ and there are some $k,u,v\in\mathbb{N}$ such that $u$ is powerful, $k$ and $v$ are odd, $\varepsilon^k=u+v\sqrt{d}$ and $d\mid v$. Furthermore, we say that {\it $d$ induces a strong counterexample to the \textnormal{(EMW)}-conjecture} (or {\it $d$ satisfies \textnormal{(SC)}} for short) if $d\equiv 7\mod 8$, $x$ is powerful, $y$ is odd and $d\mid y$.
\end{definition}

Clearly, if $d$ satisfies (SC), then $d$ satisfies (C). It is shown in \cite{Mo-Wa86} that the (EMW)-conjecture holds if and only if there is no squarefree $d\in\mathbb{N}_{\geq 2}$ that satisfies (C). This result of R. A. Mollin and P. G. Walsh provides us with a relationship between the (EMW)-conjecture and the squarefree integers $d\in\mathbb{N}_{\geq 2}$ with $d\mid y$. In Section~\ref{3}, we try to specify whether any of the numbers in the list~(\ref{List 1.1}) satisfies (C) or (SC). We prove that none of these numbers satisfies (SC) and that all but two do not satisfy (C). Nevertheless, we were unable to determine whether the remaining (two) numbers satisfy (C). For more details on the difficulties that arise here, we refer to \cite[p. 126]{Mo87}.

\begin{definition}[Conductors of relative class number one and the condition (RC)]\label{Definition 1.5} The integer $d$ is said to {\it have no nontrivial conductors of relative class number one} (or {\it to satisfy \textnormal{(RC)}} for short) if $\{f\in\mathbb{N}:{\rm h}(d)=|{\rm Pic}(\mathcal{O}_f)|\}=\{1\}$.
\end{definition}

The first systematic study (of which we are aware) of this condition was done in \cite{Fu-Pa12}. Following this, the problem of describing (RC) gained more traction \cite{Mo13} and was finally solved in \cite{Ch-Sa14}. We present the connection of (RC) and squarefree integers $d\in\mathbb{N}_{\geq 2}$ with $d\mid y$ in Proposition~\ref{Proposition 2.2}.

\begin{definition}[Unusual orders in real quadratic number fields]\label{Definition 1.6} Let $f\in\mathbb{N}$. We say that $f$ is an {\it unusual conductor of $d$} if $f$ is squarefree, $f$ is divisible by a ramified prime, $f$ is not divisible by a split prime, ${\rm h}(d)=|{\rm Pic}(\mathcal{O}_f)|=2$ and for each ramified $p\in\mathbb{P}$ with $p\mid f$ and all $a,b\in\mathbb{Z}$ we have $|pa^2-\frac{\mathsf{d}_K}{p}b^2|\not=4$. Let $D_d$ be the set of unusual conductors of $d$.
\end{definition}

The definition of an unusual conductor seems artificial, but becomes clear in view of the results of \cite{Br-Ge-Re20,Re23} (since these results provide a link to an important property in factorization theory). We discuss the relationships of unusual orders and squarefree integers $d\in\mathbb{N}_{\geq 2}$ with $d\mid y$ in Proposition~\ref{Proposition 2.4} and Theorem~\ref{Theorem 2.5} below.

\section{Results}\label{2}

We start with a lemma that will be useful in the subsequent discussion of the conditions (C) and (SC) (that were introduced in Definition~\ref{Definition 1.4} above). It will be applied in Section~\ref{3}, where we show that none of the $21$ members of the list~(\ref{List 1.1}) satisfies (SC) and all but (possibly) two of these members do not satisfy (C).

\begin{lemma}\label{Lemma 2.1}
Let $d$ satisfy \textnormal{(C)}. Then $y$ is odd.
\end{lemma}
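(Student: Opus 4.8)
The plan is to reduce everything modulo $2\mathcal{O}_K$ and to exploit the fact that raising to a power is a ring homomorphism on $\mathcal{O}_K/2\mathcal{O}_K$. Since $d$ satisfies \textnormal{(C)}, we have $d\equiv 7\pmod 8$, so in particular $d\equiv 3\pmod 4$ and $d$ is odd; hence $\omega=\sqrt{d}$, $\mathcal{O}_K=\mathbb{Z}\oplus\sqrt{d}\,\mathbb{Z}$, and $\varepsilon=x+y\sqrt{d}$. Because $\{1,\sqrt{d}\}$ is a $\mathbb{Z}$-basis, the residues of the two coordinates modulo $2$ are well defined, and this is essentially the only structure I will use.

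I would argue by contradiction, assuming that $y$ is even. The first step is to show that $x$ is then odd. As $\varepsilon$ is a unit, ${\rm N}(\varepsilon)=x^2-dy^2\in\{-1,1\}$; if $x$ were even as well, then $\varepsilon\in 2\mathcal{O}_K$ and ${\rm N}(\varepsilon)$ would be divisible by $4$, which is absurd. Therefore $x$ is odd, and since $y$ is even we obtain $\varepsilon-1=(x-1)+y\sqrt{d}\in 2\mathcal{O}_K$, i.e. $\varepsilon\equiv 1\pmod{2\mathcal{O}_K}$.

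The second step finishes the proof. Reduction modulo $2\mathcal{O}_K$ is a ring homomorphism, so $\varepsilon\equiv 1$ forces $\varepsilon^{k}\equiv 1\pmod{2\mathcal{O}_K}$ for the exponent $k$ supplied by \textnormal{(C)}. On the other hand $\varepsilon^{k}=u+v\sqrt{d}$, whence $\varepsilon^{k}-1=(u-1)+v\sqrt{d}\in 2\mathcal{O}_K=2\mathbb{Z}\oplus 2\sqrt{d}\,\mathbb{Z}$. Comparing the coefficient of $\sqrt{d}$ shows that $v$ is even, contradicting the assumption in \textnormal{(C)} that $v$ is odd. Hence $y$ must be odd.

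I do not expect a genuine obstacle here; the argument is short, and the only point requiring mild care is that the parities of the coordinates with respect to $\{1,\sqrt{d}\}$ are well defined, which is exactly why it matters that $\omega=\sqrt{d}$ rather than $\frac{1+\sqrt{d}}{2}$. It is worth noting that only two of the hypotheses in \textnormal{(C)} are actually used, namely $d\equiv 7\pmod 8$ (through $d$ being odd) and $v$ being odd; the requirements that $u$ be powerful and $k$ be odd play no role in this particular conclusion.
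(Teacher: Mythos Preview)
Your proof is correct. Both arguments are parity arguments: the paper expands $\varepsilon^{k}=(x+y\sqrt{d})^{k}$ with the binomial theorem, notes that $xy$ is even (from $x^{2}-dy^{2}={\rm N}(\varepsilon)=1$), and computes directly that $v\equiv y^{k}d^{(k-1)/2}\equiv y\pmod 2$; you instead argue by contradiction via the ring homomorphism $\mathcal{O}_K\to\mathcal{O}_K/2\mathcal{O}_K$. The content is essentially the same, but there is one genuine difference worth recording: the paper's computation singles out the term $i=k$ in the binomial sum and hence uses the hypothesis that $k$ is odd, whereas your argument does not. So your closing remark that the oddness of $k$ plays no role in this lemma is an actual (if minor) sharpening relative to the paper's proof.
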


\begin{proof}
By definition of (C), we deduce that $d\equiv 7\mod 8$ and that there are some $k,u,v\in\mathbb{N}$ such that $k$ and $v$ are odd and $\varepsilon^k=u+v\sqrt{d}$. Since $d\equiv 7\mod 8$, we have $x^2-dy^2={\rm N}(\varepsilon)=1$, and hence $xy$ is even. Therefore, $v=\sum_{i=0,i\equiv 1\mod 2}^k\binom{k}{i}x^{k-i}y^id^{\frac{i-1}{2}}\equiv y^kd^{\frac{k-1}{2}}\equiv y\mod 2$, and thus $y$ is odd.
\end{proof}

Our next result is a variant of the main theorem of \cite{Ch-Sa14}. It establishes a connection between the condition (RC) and the divisibility of $y$ by $d$.

\begin{proposition}\label{Proposition 2.2}
The number $d$ satisfies \textnormal{(RC)} if and only if ${\rm N}(\varepsilon)=1$, $d\not\equiv 1\mod 8$, $y$ is even and $d\mid y$.
\end{proposition}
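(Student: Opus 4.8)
The plan is to route everything through the analytic class number formula for orders. For each $f\in\mathbb{N}$ one has
\[
|{\rm Pic}(\mathcal O_f)|=\frac{{\rm h}(d)\,\Phi(f)}{[\mathcal O_K^\times:\mathcal O_f^\times]},\qquad \Phi(f)=f\prod_{p\mid f}\left(1-\frac1p\left(\frac{\mathsf{d}_K}{p}\right)\right),
\]
and, since extension of ideals gives a surjection ${\rm Pic}(\mathcal O_f)\twoheadrightarrow{\rm Pic}(\mathcal O_K)$, the class number ${\rm h}(d)$ divides $|{\rm Pic}(\mathcal O_f)|$. Writing $m_f:=[\mathcal O_K^\times:\mathcal O_f^\times]$, this forces $m_f\mid\Phi(f)$, so $|{\rm Pic}(\mathcal O_f)|\geq{\rm h}(d)$ with equality exactly when $m_f=\Phi(f)$; hence $d$ satisfies (RC) iff $m_f<\Phi(f)$ for every $f>1$. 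The key reinterpretation I would use is that $\mathcal O_f^\times$ is the preimage in $\mathcal O_K^\times$ of $(\mathbb Z/f\mathbb Z)^\times$ inside $G_f:=(\mathcal O_K/f\mathcal O_K)^\times/(\mathbb Z/f\mathbb Z)^\times$; since $-1\in\mathbb Z$, the index $m_f$ is precisely the order of the image of $\varepsilon$ in $G_f$, while $|G_f|=\Phi(f)$. Thus $m_f=\Phi(f)$ means exactly that $G_f$ is cyclic and generated by $\varepsilon$.

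By the Chinese remainder theorem $G_f\cong\prod_{p^k\|f}G_{p^k}$, so $m_f={\rm lcm}_{p^k\|f}m_{p^k}$ while $\Phi$ is multiplicative. Because $m_{p^k}\mid\Phi(p^k)$, the least common multiple can equal the product $\prod\Phi(p^k)$ only if each $m_{p^k}=\Phi(p^k)$; conversely one such prime power already yields an offending conductor. Hence (RC) fails iff $\varepsilon$ generates $G_{p^k}$ for some prime power $p^k>1$, and the proposition reduces to deciding this prime power by prime power. This is the organizing principle of the whole argument.

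For necessity I would exhibit a generating prime power whenever one of the four conditions fails. If $d\equiv1\bmod 8$ then $2$ splits and $\Phi(2)=1$, so $m_2=\Phi(2)$ automatically. If $d\not\equiv1\bmod8$ but $y$ is odd, then $\varepsilon\notin\mathcal O_2=\mathbb Z+2\mathcal O_K$, so $\varepsilon$ is nontrivial in $G_2$; as $|G_2|=\Phi(2)\in\{2,3\}$ is prime, $\varepsilon$ generates $G_2$. If some ramified $p\mid d$ has $p\nmid y$, then $G_p$ has prime order $p$ and $\varepsilon\notin\mathcal O_p$, so $\varepsilon$ generates $G_p$; thus (RC) forces $p\mid y$ for every $p\mid d$, i.e. $d\mid y$. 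The remaining implication, that (RC) forces ${\rm N}(\varepsilon)=1$, is the delicate one: assuming ${\rm N}(\varepsilon)=-1$ (which already restricts $d$ to $d\equiv2$ or $5\bmod8$ and makes every odd prime divisor of $d$ be $\equiv1\bmod4$) one must still produce a generating prime power, and here the relation $\sigma(\varepsilon)\equiv{\rm N}(\varepsilon)\,\varepsilon^{-1}$ in $G_{p^k}$ (with $\sigma$ the nontrivial automorphism) must be exploited. I expect this to be the main obstacle.

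For sufficiency I would assume all four conditions and rule out every prime power. The same conjugation relation shows that when ${\rm N}(\varepsilon)=1$ the image of $\varepsilon$ in $G_{p^k}$, for $p$ odd and unramified, lands in a proper (index at least two) subgroup of the cyclic group $G_{p^k}$ — it is a square in the split case, and the image of the norm-one subgroup (whose intersection with $(\mathbb Z/p^k\mathbb Z)^\times$ is $\{\pm1\}$) in the inert case — so $\varepsilon$ cannot generate it. For a ramified odd $p\mid d$ the hypothesis $d\mid y$ gives $\varepsilon\equiv x\bmod\mathfrak p$, which for $f=p$ forces $m_p=1$ and, through a more careful analysis, keeps $m_{p^k}$ below $\Phi(p^k)$. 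The prime $p=2$ has to be handled in the cases $d\equiv2,3,5\bmod8$ using $2\mid y$, $d\not\equiv1\bmod8$ and ${\rm N}(\varepsilon)=1$. I anticipate the genuine difficulty to be concentrated in the ramified primes and in the $2$-power conductors $2^k$ with $k\geq3$, where $G_{2^k}$ need no longer be cyclic and where the precise $2$-adic order of $\varepsilon$ must be computed; this is the technical heart of the proof, and it is here that the argument parallels and can lean on the main theorem of \cite{Ch-Sa14}.
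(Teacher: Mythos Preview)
Your proposal takes a genuinely different route from the paper. The paper's proof is almost entirely a notation conversion: it introduces the quantity $\tilde\beta_0$ used in \cite{Ch-Sa14}, checks that $\tilde\beta_0\mid y\mid 2\tilde\beta_0$, and then invokes \cite[Proposition~3.4 and Theorem~4.1]{Ch-Sa14} directly to obtain both implications. No structural analysis of $G_{p^k}$ or of $m_f$ is performed; the only nontrivial step is the parity bookkeeping showing that ``$\tilde\beta_0$ even and $d\mid\tilde\beta_0$'' is equivalent to ``$y$ even and $d\mid y$''. Your approach, by contrast, attempts to reprove the content of \cite{Ch-Sa14} from the order class number formula and the group-theoretic description $m_f=\mathrm{ord}_{G_f}(\varepsilon)$.

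As a strategy this is sound and your prime-power reduction is correct, but the proposal is not a complete proof and you flag the holes yourself. The most serious one is the implication ``(RC) $\Rightarrow$ ${\rm N}(\varepsilon)=1$'': your conjugation observation $\sigma(\bar\varepsilon)=\bar\varepsilon^{-1}$ in $G_{p^k}$ holds regardless of the sign of the norm, so it does not by itself distinguish the two cases, and you do not supply the missing argument (this is exactly what \cite[Proposition~3.4]{Ch-Sa14} provides). The sufficiency direction at ramified odd $p^k$ does go through via the standard lifting-the-exponent step $\varepsilon\in\mathcal O_p\Rightarrow\varepsilon^{p^{k-1}}\in\mathcal O_{p^k}$, but the $2$-power conductors for $d\equiv 2,3,5\bmod 8$ still require a case analysis you only gesture at. In short: the paper buys brevity by outsourcing all of this to \cite{Ch-Sa14}; your route would be self-contained but, as written, still owes the reader precisely the two computations you identify as ``the main obstacle'' and ``the technical heart''.
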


\begin{proof}
First we recall some notation from \cite{Ch-Sa14}. Clearly, there exist unique $\alpha_0,\beta_0\in\mathbb{Q}$ such that $\varepsilon=\alpha_0+\beta_0\sqrt{d}$. Note that $2\alpha_0,2\beta_0\in\mathbb{N}$. We set

\[
\tilde{\beta}_0=\begin{cases}\beta_0&\textnormal{if }\varepsilon\in\mathbb{Z}[\sqrt{d}],\\ 2\beta_0&\textnormal{if }\varepsilon\not\in\mathbb{Z}[\sqrt{d}].\end{cases}
\]

Observe that $\tilde{\beta}_0\in\mathbb{N}$ and

\[
y=\begin{cases}\tilde{\beta}_0&\textnormal{if }d\not\equiv 1\mod 4\textnormal{ or }\varepsilon\not\in\mathbb{Z}[\sqrt{d}],\\ 2\tilde{\beta}_0&\textnormal{if }d\equiv 1\mod 4\textnormal{ and }\varepsilon\in\mathbb{Z}[\sqrt{d}].\end{cases}
\]

In particular, $\tilde{\beta}_0\mid y$ and $y\mid 2\tilde{\beta}_0$.

\smallskip
Now let $d$ satisfy (RC). We infer by \cite[Proposition 3.4]{Ch-Sa14} that ${\rm N}(\varepsilon)=1$. Moreover, it follows from \cite[Theorem 4.1]{Ch-Sa14} that $d\mid\tilde{\beta}_0$, and hence $d\mid y$. If $d$ is even, then clearly $d\not\equiv 1\mod 8$ and $y$ is even (since $d\mid y)$. Now let $d$ be odd. Then \cite[Theorem 4.1]{Ch-Sa14} implies that $d\not\equiv 1\mod 8$ and $\tilde{\beta}_0$ is even. Therefore, $y$ is even.

\smallskip
Conversely, let ${\rm N}(\varepsilon)=1$, let $d\not\equiv 1\mod 8$, let $y$ be even and let $d\mid y$. Next we show that $d\mid\tilde{\beta}_0$ and $\tilde{\beta}_0$ is even. Without restriction, we can assume that $d\equiv 1\mod 4$ and $\varepsilon\in\mathbb{Z}[\sqrt{d}]$. Since $d$ is odd, we know from $d\mid y=2\tilde{\beta}_0$ that $d\mid\tilde{\beta}_0$. Also note that $\alpha_0,\beta_0\in\mathbb{N}$ and $\beta_0=\tilde{\beta}_0$. Consequently, $1={\rm N}(\varepsilon)=\alpha_0^2-d\tilde{\beta}_0^2$, and thus $\alpha_0^2\equiv 1+\tilde{\beta}_0^2\mod 4$. If $\tilde{\beta}_0$ is odd, then $\alpha_0^2\equiv 2\mod 4$, a contradiction. This implies that $\tilde{\beta}_0$ is even. It is now an immediate consequence of \cite[Theorem 4.1]{Ch-Sa14} that $d$ satisfies (RC).
\end{proof}

\begin{lemma}\label{Lemma 2.3}
Let $p,q\in\mathbb{P}$ be such that $p\equiv 1\mod 4$, $q\equiv 3\mod 4$ and $d=pq$. If $y$ is even, then there are some $a,b\in\mathbb{Z}$ such that $|pa^2-qb^2|=1$. If $y$ is odd, then there are some $a,b\in\mathbb{Z}$ such that $|a^2-db^2|=2$ or there are some $a,b\in\mathbb{Z}$ such that $|pa^2-qb^2|=2$.
\end{lemma}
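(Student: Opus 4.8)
The plan is to exploit the factorization of the Pell equation that the fundamental unit satisfies. First I would record the setup: since $p\equiv 1$ and $q\equiv 3\mod 4$, we have $d=pq\equiv 3\mod 4$, so $\omega=\sqrt{d}$ and $\varepsilon=x+y\sqrt{d}$. Next I would argue that ${\rm N}(\varepsilon)=1$: a unit of norm $-1$ would give a solution of $x^2-dy^2=-1$, forcing $-1$ to be a quadratic residue modulo $q$, which is impossible for $q\equiv 3\mod 4$. Hence $x^2-dy^2=1$, equivalently $(x-1)(x+1)=pq\,y^2$. A quick parity check (using that $d$ is odd) shows that $x$ is odd exactly when $y$ is even and $x$ is even exactly when $y$ is odd, matching the two cases of the statement.

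The heart of the argument is a coprime-factorization analysis. When $y$ is odd, $x$ is even, so $x-1$ and $x+1$ are coprime odd numbers whose product is $pq\,y^2$. Since $y^2$ is a perfect square and the two factors are coprime, the squarefree part of each factor divides $pq$, and the two squarefree parts multiply to $pq$; hence the pair of squarefree parts of $x-1$ and $x+1$ is either $\{1,pq\}$ or $\{p,q\}$. In the first case $x-1$ and $x+1$ are, up to order, of the form $a^2$ and $db^2$, and subtracting gives $|a^2-db^2|=2$; in the second case they are of the form $pa^2$ and $qb^2$, and subtracting gives $|pa^2-qb^2|=2$. This is exactly the disjunction claimed.

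When $y$ is even, $x$ is odd, so $x-1$ and $x+1$ are both even with greatest common divisor $2$; writing $x-1=2u$, $x+1=2v$ and $y=2w$ yields coprime $u,v$ with $uv=pq\,w^2$ and $v-u=1$. The same squarefree-part analysis leaves two possibilities: either $u$ and $v$ are of the form $pa^2$ and $qb^2$, in which case $v-u=1$ gives $|pa^2-qb^2|=1$ as desired, or one of $u,v$ is divisible by $pq$ while the other is a perfect square. I must exclude the latter. If the perfect-square factor is the larger one, I obtain a solution $\beta^2-d\alpha^2=1$ with $0<\alpha<y$ (the inequality follows from $\alpha^2/y^2=1/(2(x+1))$), contradicting the minimality of the fundamental solution $y$; if it is the smaller one, I obtain $\alpha^2-d\beta^2=-1$, a unit of norm $-1$, which is impossible as above.

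The main obstacle is precisely this last step: in the even case, ruling out the distribution in which $pq$ divides a single one of the two coprime factors, i.e.\ showing that $\varepsilon$ is not a square in $\mathbb{Z}[\sqrt{d}]$. A clean way to handle both bad subcases at once is to observe that $\eta=\sqrt{(x+1)/2}+\sqrt{(x-1)/2}$ satisfies $\eta^2=x+y\sqrt{d}=\varepsilon$; in either bad subcase the explicit factorization places $\eta$ in $\mathbb{Z}[\sqrt{d}]$, so $\eta$ is a unit with $1<\eta<\varepsilon$, contradicting the fact that $\varepsilon$ is the fundamental unit. Everything else is routine divisibility bookkeeping.
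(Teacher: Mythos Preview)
Your argument is correct and follows precisely the route the paper points to: the paper does not spell out a proof but states that the lemma ``can be shown by investigating the norm of the fundamental unit'' and cites \cite[proof of Theorem 4.4, Case 3]{Re23}, which is exactly the Pell-equation factorization $(x-1)(x+1)=pq\,y^2$ together with the coprime splitting you carry out. Your exclusion of the bad subcase in the even-$y$ situation via $\eta^2=\varepsilon$ (equivalently, via minimality of $y$ and the impossibility of norm $-1$) is the standard step, so nothing is missing.
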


\begin{proof}
This is well-known and can be shown by investigating the norm of the fundamental unit. A detailed proof can be found in \cite[proof of Theorem 4.4, Case 3]{Re23}.
\end{proof}

In \cite[Theorem 5.4]{Re23} it was shown that the set of real quadratic number fields that have an order with an unusual conductor can (naturally) be divided into 41 disjoint subsets. It was also proved in \cite{Re23} that all but (possibly) one of these subsets are nonempty. The squarefree integers that define the real quadratic number fields in the aforementioned exceptional subset are called the {\it squarefree integers of type 4/form 1} (in the terminology of \cite[p. 88]{Re23}). Note that the squarefree integers $d$ that satisfy the conditions in Proposition~\ref{Proposition 2.4} below are precisely the squarefree integers $d$ of type 4/form 1. The hitherto open problem of their existence was the driving factor for the search conducted in \cite{Re23}. Recall that $D_d$ denotes the set of unusual conductors of $d$ (see Definition~\ref{Definition 1.6}) and ${\rm h}(d)$ denotes the class number of $K$.

\begin{proposition}\label{Proposition 2.4}
Let $p,q\in\mathbb{P}$ be such that $p\equiv 1\mod 4$, $q\equiv 3\mod 4$, $d=pq$ and ${\rm h}(d)=2$. The following conditions are equivalent:
\begin{enumerate}
\item[(1)] $D_d=\{2\}$.
\item[(2)] $p\equiv 5\mod 8$, $y$ is odd and $d\mid y$.
\item[(3)] $p\equiv 5\mod 8$, $\pmb{\Big(}\frac{p}{q}\pmb{\Big)}=-1$ and $d\mid y$.
\end{enumerate}
\end{proposition}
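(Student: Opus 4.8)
The plan is to transfer every condition in (1)--(3) into the arithmetic of the three ramified primes $2,p,q$ of $\mathcal{O}_K$. Note first that $d=pq\equiv 3\bmod 4$, so $\omega=\sqrt{d}$, $\mathsf{d}_K=4pq$, $\mathcal{O}_K=\mathbb{Z}[\sqrt{d}]$, and ${\rm N}(\varepsilon)=1$ because $x^2-dy^2=-1$ is insoluble modulo $q$. I would set up two tools. The first is the class number formula for orders,
\[
|{\rm Pic}(\mathcal{O}_f)|=\frac{f\cdot{\rm h}(d)}{[\mathcal{O}_K^{\times}:\mathcal{O}_f^{\times}]}\prod_{\ell\mid f}\Big(1-\tfrac{1}{\ell}\left(\tfrac{\mathsf{d}_K}{\ell}\right)\Big).
\]
The second is the elementary observation that, writing $\varepsilon^k=x_k+y_k\sqrt{d}$, one has $\varepsilon^k\in\mathcal{O}_f=\mathbb{Z}[f\sqrt{d}]$ if and only if $f\mid y_k$; hence the index $\tau(f):=[\mathcal{O}_K^{\times}:\mathcal{O}_f^{\times}]$ is the rank of apparition of $f$ in $(y_k)_k$. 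Since ${\rm Pic}(\mathcal{O}_f)\twoheadrightarrow{\rm Pic}(\mathcal{O}_K)$ is surjective, $|{\rm Pic}(\mathcal{O}_f)|={\rm h}(d)$ holds precisely when $\tau(f)$ attains its maximal value $\prod_{\ell\mid f}(\ell-\left(\tfrac{\mathsf{d}_K}{\ell}\right))$. For a ramified prime $\ell$ the local quotient controlling $\tau$ has order $\ell$, so $\tau(\ell)\in\{1,\ell\}$, and $\tau(\ell)=1$ exactly when $\ell\mid y$.

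With these in hand I would determine $D_d$. If $d\mid y$ then $p\mid y$ and $q\mid y$, so $\tau$ of any $f$ divisible by $p$ or $q$ is a proper divisor of its maximum and $|{\rm Pic}(\mathcal{O}_f)|>{\rm h}(d)$; adjoining an inert prime $\ell$ to $2$ also overshoots, since $\ell+1$ even forces $\mathrm{lcm}(2,\tau(\ell))<2(\ell+1)$. Hence the only surviving candidate is $f=2$. Conversely, if $D_d=\{2\}$ but $d\nmid y$, then (as $2\in D_d$ forces $y$ odd, see below) one of $p,q$ has maximal index $\ell$ and is itself of relative class number one; its condition in Definition~\ref{Definition 1.6} unwinds via $|\ell a^2-\tfrac{\mathsf{d}_K}{\ell}b^2|\neq 4\Leftrightarrow|pa^2-qb^2|\neq 1$, which by Lemma~\ref{Lemma 2.3} holds because $y$ is odd, making that prime an unusual conductor---a contradiction. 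So (1) already forces $d\mid y$.

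It then remains to analyse $f=2$. The formula gives $|{\rm Pic}(\mathcal{O}_2)|={\rm h}(d)$ iff $\tau(2)=2$ iff $y$ is odd; and the Definition~\ref{Definition 1.6} inequality for the ramified prime $2$ reads $|a^2-db^2|\neq 2$. I would translate the relevant solubilities into principality of ramified prime ideals: $|a^2-db^2|=2$ is soluble iff $\mathfrak{p}_2$ is principal, $|pa^2-qb^2|=1$ is soluble iff $\mathfrak{p}_p$ is principal, and $y$ is even iff $\mathfrak{p}_p$ is principal (one direction is Lemma~\ref{Lemma 2.3}, the other a short parity computation on $y_k$ using that $x$ is even when $y$ is odd). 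The congruence input is elementary: $p\equiv 5\bmod 8$ gives $\left(\tfrac{2}{p}\right)=\left(\tfrac{-2}{p}\right)=-1$, so $|a^2-db^2|=2$ is insoluble modulo $p$ and condition (e) holds automatically; and solubility of $|pa^2-qb^2|=2$, read modulo $p$ and $q$ with reciprocity, forces $\left(\tfrac{2}{p}\right)=\left(\tfrac{p}{q}\right)$, tying $p\equiv 5\bmod 8$ to $\left(\tfrac{p}{q}\right)=-1$.

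Assembling: (2)$\Leftrightarrow$(3) falls out elementarily from Lemma~\ref{Lemma 2.3}, the coprime factorization $(x-1)(x+1)=pqy^2$, and quadratic reciprocity. For (2)$\Rightarrow$(1) the hypotheses give $y$ odd (so $2$ passes the index test), $p\equiv 5\bmod 8$ (so $2$ passes (e)), and $d\mid y$ (so nothing else survives), whence $D_d=\{2\}$. In (1)$\Rightarrow$(2), membership $2\in D_d$ yields $y$ odd and $|a^2-db^2|\neq 2$, and the conductor analysis yields $d\mid y$. The one genuinely non-elementary step---the main obstacle---is deducing $p\equiv 5\bmod 8$ from condition (e): I must show $p\equiv 1\bmod 8\Rightarrow\mathfrak{p}_2$ is principal, i.e. upgrade local solubility of $a^2-db^2=\pm2$ to global solubility. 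Here I would invoke that ${\rm h}(d)=2$ forces $h^+(d)=2{\rm h}(d)=4$, equal to the number of genera $2^{3-1}$, so the narrow class group is $(\mathbb{Z}/2\mathbb{Z})^2$ with trivial principal genus; then a ramified class is principal iff its genus characters vanish, and the genus character attached to the factorization $\mathsf{d}_K=p\cdot(4q)$ evaluates on $\mathfrak{p}_2$ to $\left(\tfrac{p}{2}\right)=\left(\tfrac{2}{p}\right)$ and on $\mathfrak{p}_p$ to $\left(\tfrac{p}{q}\right)$, pinning principality of $\mathfrak{p}_2$ to $p\equiv 1\bmod 8$ and of $\mathfrak{p}_p$ to $\left(\tfrac{p}{q}\right)=1$. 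Alternatively, one can cite the classification of type 4/form 1 integers in \cite{Re23}.
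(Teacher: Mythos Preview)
Your approach is correct and takes a genuinely different route from the paper's. The paper's proof is a short deduction from three cited results used as black boxes: \cite[Theorem~4.4]{Re23} (characterizing membership of a ramified prime in $D_d$), \cite[Theorem~5.9.7.4]{HK13a} (linking $|{\rm Pic}(\mathcal{O}_r)|={\rm h}(d)$ for a ramified prime $r$ to $r\nmid y$), and \cite[Theorem~5.4]{Re23} (the structural result restricting $D_d$ to subsets of $\{2,p,q\}$). In particular, $2\in D_d$ instantly yields $\left(\tfrac{2}{p}\right)=-1$ via \cite[Theorem~4.4]{Re23}, and the equivalence of ``$y$ odd'' with ``$\left(\tfrac{p}{q}\right)=-1$'' under $p\equiv 5\bmod 8$ is obtained through Lemma~\ref{Lemma 2.3} and elementary Kronecker-symbol manipulations.

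You instead rebuild the needed ingredients. The order class number formula combined with the rank of apparition of $f$ in the Lucas sequence $(y_k)$ reproduces the content of \cite[Theorem~5.9.7.4]{HK13a}, and your direct unwinding of the norm-form condition in Definition~\ref{Definition 1.6} plays the role of \cite[Theorem~4.4]{Re23}. The one genuinely new idea, which you correctly flag as the main obstacle, is the genus-theoretic deduction of $p\equiv 5\bmod 8$ from insolubility of $|a^2-db^2|=2$: since ${\rm N}(\varepsilon)=1$ and ${\rm h}(d)=2$, the narrow class group has order $4$, equal to the number $2^{t-1}$ of genera (three prime discriminants $p,-q,-4$), hence coincides with the genus group $(\mathbb{Z}/2)^2$; the kernel of the surjection onto the wide class group is then precisely where the character $\chi_p$ is trivial, so $\mathfrak{p}_2$ is widely principal iff $\chi_p(\mathfrak{p}_2)=\left(\tfrac{2}{p}\right)=1$. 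This works and avoids any appeal to \cite{Re23}. What you gain is self-containment; what the paper gains is brevity. One minor omission: your elimination of conductors $f=2m$ with $m$ a product of inert primes is stated only for $m$ prime, but the bound extends verbatim since $\mathrm{lcm}(2,\tau(\ell_1),\dots,\tau(\ell_k))\le\mathrm{lcm}(\ell_1+1,\dots,\ell_k+1)\le\prod_i(\ell_i+1)<2\prod_i(\ell_i+1)$, each $\ell_i+1$ being even.
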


\begin{proof}
First, we show that if $p\equiv 5\mod 8$, then $y$ is odd if and only if $\pmb{\Big(}\frac{p}{q}\pmb{\Big)}=-1$. Let $p\equiv 5\mod 8$.

\smallskip
Let $y$ be odd. If there are some $a,b\in\mathbb{Z}$ such that $|a^2-db^2|=2$, then $\pmb{\Big(}\frac{2}{p}\pmb{\Big)}=1$, which contradicts the fact that $p\equiv 5\mod 8$. We infer by Lemma~\ref{Lemma 2.3} that there are some $a,b\in\mathbb{Z}$ such that $|pa^2-qb^2|=2$. Consequently, $\pmb{\Big(}\frac{p}{q}\pmb{\Big)}=\pmb{\Big(}\frac{q}{p}\pmb{\Big)}=\pmb{\Big(}\frac{2}{p}\pmb{\Big)}=-1$.

\smallskip
Now let $y$ be even. By Lemma~\ref{Lemma 2.3}, there are some $a,b\in\mathbb{Z}$ such that $|pa^2-qb^2|=1$. This implies that $\pmb{\Big(}\frac{p}{q}\pmb{\Big)}=\pmb{\Big(}\frac{q}{p}\pmb{\Big)}=1$.

\medskip
(1) $\Rightarrow$ (2) Since $2\in D_d$, it follows from \cite[Theorem 4.4]{Re23} that ${\rm h}(d)=|{\rm Pic}(\mathcal{O}_2)|$ and $\pmb{\Big(}\frac{2}{p}\pmb{\Big)}=-1$. Therefore, $p\equiv 5\mod 8$ and $y$ is odd by \cite[Theorem 5.9.7.4]{HK13a}. Since $\pmb{\Big(}\frac{p}{q}\pmb{\Big)}=-1$, it follows that $\pmb{\Big(}\frac{\alpha d/p}{p}\pmb{\Big)}=\pmb{\Big(}\frac{-\alpha q}{p}\pmb{\Big)}=-1$ for each $\alpha\in\{-1,1\}$, and since $p,q\not\in D_d$, we infer by \cite[Theorem 4.4]{Re23} that ${\rm h}(d)\not=|{\rm Pic}(\mathcal{O}_r)|$ for each $r\in\{p,q\}$. Therefore, $r\mid y$ for each $r\in\{p,q\}$ by \cite[Theorem 5.9.7.4]{HK13a}, and thus $d\mid y$.

\medskip
(2) $\Rightarrow$ (3) This is clear.

\medskip
(3) $\Rightarrow$ (1) Since $y$ is odd and $d\mid y$, we infer by \cite[Theorem 5.9.7.4]{HK13a} that ${\rm h}(d)=|{\rm Pic}(\mathcal{O}_2)|$ and $|{\rm Pic}(\mathcal{O}_p)|\not={\rm h}(d)\not=|{\rm Pic}(\mathcal{O}_q)|$. Since $p\equiv 5\mod 8$, it follows from \cite[Theorem 4.4]{Re23} that $2\in D_d$ and $p,q\not\in D_d$. Therefore, $D_d=\{2\}$ by \cite[Theorem 5.4]{Re23}.
\end{proof}

Finally, we present the main result of this note. It was the main motivation (besides Proposition~\ref{Proposition 2.4}) for the computer search discussed below.

\begin{theorem}\label{Theorem 2.5}
Let ${\rm h}(d)=2$ and suppose one of the following conditions is satisfied:
\begin{enumerate}
\item[(a)] There are distinct $p,q\in\mathbb{P}$ such that $p\equiv q\equiv 1\mod 4$, $d=pq$ and ${\rm N}(\varepsilon)=-1$.
\item[(b)] There are $p,q\in\mathbb{P}$ such that $p\equiv 1\mod 8$, $q\equiv 3\mod 4$, $d=pq$ and $y$ is odd.
\item[(c)] There are distinct $p,q\in\mathbb{P}$ such that $p\equiv q\equiv 3\mod 8$ and $d=2pq$.
\item[(d)] There are $p,q\in\mathbb{P}$ such that $p\equiv 1\mod 8$, $q\equiv 3\mod 4$, $\pmb{\Big(}\frac{p}{q}\pmb{\Big)}=-1$ and $d=2pq$.
\end{enumerate}
Then $D_d=\emptyset$ if and only if $d\mid y$.
\end{theorem}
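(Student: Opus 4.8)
The plan is to reduce all four cases to the machinery of \cite[Theorem 4.4, Theorem 5.4]{Re23} together with the class-number/parity dictionary of \cite[Theorem 5.9.7.4]{HK13a}, exactly as in the proof of Proposition~\ref{Proposition 2.4}. The key observation is that in each of (a)--(d) we have $\mathrm{h}(d)=2$, so by \cite[Theorem 5.4]{Re23} the set $D_d$ of unusual conductors is controlled by which primes $r\mid d$ satisfy $\mathrm{h}(d)=|\mathrm{Pic}(\mathcal{O}_r)|$, and whether the norm-$4$ obstruction $|ra^2-\frac{\mathsf{d}_K}{r}b^2|\neq 4$ from Definition~\ref{Definition 1.6} holds. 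The condition $d\mid y$ will be translated, via \cite[Theorem 5.9.7.4]{HK13a}, into a statement that $\mathrm{h}(d)\neq|\mathrm{Pic}(\mathcal{O}_r)|$ for the relevant ramified primes $r$, which is precisely what excludes those $r$ from being unusual conductors.

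First I would set up the norm data: in cases (b), (c), (d) one has $\mathrm{N}(\varepsilon)=1$ (forced by $q\equiv 3\bmod 4$ dividing $\mathsf{d}_K$, or by $2\mid d$ with $d\equiv 3\bmod4$-type congruences), whereas (a) explicitly assumes $\mathrm{N}(\varepsilon)=-1$. I would record for each case the list of ramified primes dividing the conductor candidates and the relevant Kronecker symbols $\pmb{\bigl(}\frac{r}{s}\pmb{\bigr)}$, using quadratic reciprocity and the stated congruences $p\equiv 1\bmod 8$, $q\equiv 3\bmod 4$, etc. The parity of $y$ enters through Lemma~\ref{Lemma 2.3}: in the cases where $d=pq$ or $d=2pq$, the lemma converts $y$ even versus $y$ odd into the solvability of $|pa^2-qb^2|=1$ versus $=2$ (or the analogue with a factor $2$), and these in turn pin down the symbols $\pmb{\bigl(}\frac{p}{q}\pmb{\bigr)}$ and $\pmb{\bigl(}\frac{2}{p}\pmb{\bigr)}$, mirroring the opening computation of Proposition~\ref{Proposition 2.4}.

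For the forward direction ($D_d=\emptyset\Rightarrow d\mid y$) I would argue contrapositively: if $d\nmid y$, then some ramified prime $r\mid d$ has $\mathrm{h}(d)=|\mathrm{Pic}(\mathcal{O}_r)|$ by \cite[Theorem 5.9.7.4]{HK13a}, and the congruence hypotheses guarantee the non-split and norm-$4$ side conditions of Definition~\ref{Definition 1.6}, so that $r\in D_d$, contradicting $D_d=\emptyset$. For the reverse direction ($d\mid y\Rightarrow D_d=\emptyset$) I would show that for every ramified prime $r$ (including $r=2$ in cases (c), (d)) the equality $\mathrm{h}(d)=|\mathrm{Pic}(\mathcal{O}_r)|$ fails, again via \cite[Theorem 5.9.7.4]{HK13a}, using $d\mid y$ and the computed symbols; since $\mathrm{h}(d)=2$, no conductor can be unusual and $D_d=\emptyset$. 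The main obstacle will be the careful bookkeeping of the norm-$4$ condition $|ra^2-\frac{\mathsf{d}_K}{r}b^2|\neq 4$ in the even cases (c) and (d), where $r=2$ is ramified and $\mathsf{d}_K=4d$: here one must rule out the sporadic solution of $|2a^2-2db^2|=4$, i.e. $|a^2-db^2|=2$, which is exactly the parity obstruction that Lemma~\ref{Lemma 2.3} and the residue $p\equiv 1\bmod 8$ (forcing $\pmb{\bigl(}\frac{2}{p}\pmb{\bigr)}=1$) are designed to handle; distinguishing this from the genuinely unusual behaviour is the delicate point that separates the four cases.
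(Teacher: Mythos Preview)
Your overall strategy---translate $d\mid y$ into Picard-group inequalities via \cite[Theorem 5.9.7.4]{HK13a} and then match these against the unusual-conductor criteria of \cite{Re23}---is the same as the paper's. However, there is a genuine gap in case~(b). There $d=pq$ with $q\equiv 3\bmod 4$, so $d\equiv 3\bmod 4$, $\mathsf{d}_K=4d$, and $2$ is a ramified prime even though $2\nmid d$; your parenthetical ``including $r=2$ in cases (c), (d)'' suggests you have overlooked this. Worse, $y$ is \emph{odd} in (b) by hypothesis, so \cite[Theorem 5.9.7.4]{HK13a} gives $\mathrm{h}(d)=|\mathrm{Pic}(\mathcal{O}_2)|$, the opposite of what your reverse-direction claim ``for every ramified prime $r$ the equality $\mathrm{h}(d)=|\mathrm{Pic}(\mathcal{O}_r)|$ fails'' asserts. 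The paper excludes $2$ from $D_d$ here by an entirely different mechanism: $p\equiv 1\bmod 8$ forces $\pmb{\Big(}\frac{2}{p}\pmb{\Big)}=1$, and then \cite[Theorem 4.4]{Re23} rules $2$ out directly, independently of any Picard-group comparison.

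You also misplace the delicate work. In (c) and (d) the prime $2$ is dispatched trivially: $d=2pq$ and ${\rm N}(\varepsilon)=1$ force $y$ to be even, whence $\mathrm{h}(d)\neq|\mathrm{Pic}(\mathcal{O}_2)|$ and $2\notin D_d$ without any norm-$4$ analysis. The genuine Kronecker-symbol computations in (c) and (d) concern the odd primes $p,q$, and in (b) the crucial input $\pmb{\Big(}\frac{p}{q}\pmb{\Big)}=-1$ (which yields $|pa^2-qb^2|\neq 1$ and hence the norm-$4$ obstruction for $p$ and $q$) is taken from \cite[Lemma 4.3]{Re23}, not from Lemma~\ref{Lemma 2.3} of this paper---the latter only covers $d=pq$ with one prime ${\equiv}\,1$ and one ${\equiv}\,3\bmod 4$, so it says nothing about (a), (c), or (d). Finally, the reduction from $D_d=\emptyset$ to a statement about prime conductors is carried by \cite[Corollary 3.10]{Re23} in (a), (b) and by \cite[Theorem 2.6(3)]{Re23} in (c), (d), rather than by \cite[Theorem 5.4]{Re23}.
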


\begin{proof}
It is a simple consequence of \cite[Theorem 5.9.7.4]{HK13a} that for each ramified $r\in\mathbb{P}$, ${\rm h}(d)\not=|{\rm Pic}(\mathcal{O}_r)|$ if and only if $r\mid y$. In what follows, we use this fact without further mention.

\smallskip
(a) Obviously, $\{p,q\}$ is the set of ramified primes. It follows immediately from \cite[Corollary 3.10(2)]{Re23} that $D_d=\emptyset$ if and only if ${\rm h}(d)\not=|{\rm Pic}(\mathcal{O}_r)|$ for each $r\in\{p,q\}$ if and only if $r\mid y$ for each $r\in\{p,q\}$ if and only if $d\mid y$.

\smallskip
(b) Clearly, $\{2,p,q\}$ is the set of ramified primes. Since $p\equiv 1\mod 8$, we have $\pmb{\Big(}\frac{2}{p}\pmb{\Big)}=1$, and hence $2\not\in D_d$ by \cite[Theorem 4.4]{Re23}. Moreover, $\pmb{\Big(}\frac{p}{q}\pmb{\Big)}=-1$ by \cite[Lemma 4.3]{Re23}, and thus for all $a,b\in\mathbb{Z}$, $|pa^2-qb^2|\not=1$. This implies that for each $r\in\{p,q\}$ and all $a,b\in\mathbb{Z}$, $|ra^2-\frac{\mathsf{d}_K}{r}b^2|\not=4$. We infer by \cite[Corollary 3.10(1)]{Re23} that $D_d=\emptyset$ if and only if ${\rm h}(d)\not=|{\rm Pic}(\mathcal{O}_r)|$ for each $r\in\{p,q\}$ if and only if $r\mid y$ for each $r\in\{p,q\}$ if and only if $d\mid y$.

\smallskip
(c) Observe that $\{2,p,q\}$ is the set of ramified primes and $x^2-dy^2=1$. Therefore, $y$ is even, and hence $2\not\in D_d$ by \cite[Theorem 4.4]{Re23}. Let $\alpha\in\{-1,1\}$. Then $\pmb{\Big(}\frac{\alpha d/p}{p}\pmb{\Big)}=\pmb{\Big(}\frac{2\alpha}{p}\pmb{\Big)}\pmb{\Big(}\frac{q}{p}\pmb{\Big)}=-\alpha\pmb{\Big(}\frac{q}{p}\pmb{\Big)}\not=-\alpha\pmb{\Big(}\frac{p}{q}\pmb{\Big)}=\pmb{\Big(}\frac{-\alpha p}{q}\pmb{\Big)}$, and hence $\pmb{\Big(}\frac{\alpha d/p}{p}\pmb{\Big)}=-1$ or $\pmb{\Big(}\frac{-\alpha p}{q}\pmb{\Big)}=-1$. It follows by analogy that $\pmb{\Big(}\frac{\alpha d/q}{q}\pmb{\Big)}=-1$ or $\pmb{\Big(}\frac{-\alpha q}{p}\pmb{\Big)}=-1$. We infer by \cite[Theorem 4.4]{Re23} that for each $r\in\{p,q\}$, $r\not\in D_d$ if and only if $r\mid y$. Since $y$ is even and $2\not\in D_d$, we deduce by \cite[Theorem 2.6(3)]{Re23} that $D_d=\emptyset$ if and only if $r\not\in D_d$ for each $r\in\{p,q\}$, if and only if $r\mid y$ for each $r\in\{p,q\}$, if and only if $d\mid y$.

\smallskip
(d) Here again $\{2,p,q\}$ is the set of ramified primes and $x^2-dy^2=1$. We infer that $y$ is even, and thus $2\not\in D_d$ by \cite[Theorem 4.4]{Re23}. Let $\alpha\in\{-1,1\}$. Observe that $\pmb{\Big(}\frac{\alpha d/p}{p}\pmb{\Big)}=\pmb{\Big(}\frac{2q}{p}\pmb{\Big)}=\pmb{\Big(}\frac{p}{q}\pmb{\Big)}=-1$ and $\pmb{\Big(}\frac{-\alpha q}{p}\pmb{\Big)}=\pmb{\Big(}\frac{q}{p}\pmb{\Big)}=-1$. Then \cite[Theorem 4.4]{Re23} implies that for each $r\in\{p,q\}$, $r\not\in D_d$ if and only if $r\mid y$. Since $y$ is even and $2\not\in D_d$, it follows from \cite[Theorem 2.6(3)]{Re23} that $D_d=\emptyset$ if and only if $r\not\in D_d$ for each $r\in\{p,q\}$, if and only if $r\mid y$ for each $r\in\{p,q\}$, if and only if $d\mid y$.
\end{proof}

\section{Examples and computational results}\label{3}

In what follows, let $X,Y\in\mathbb{N}_0$ be such that $X+Y\sqrt{d}$ is the fundamental unit of $\mathbb{Z}[\sqrt{d}]$ (i.e., $X+Y\sqrt{d}$ is the unique unit $\eta$ of $\mathbb{Z}[\sqrt{d}]$ such that $\eta>1$ and $\{\pm\eta^k:k\in\mathbb{Z}\}$ is the unit group of $\mathbb{Z}[\sqrt{d}]$). Observe that $X+Y\sqrt{d}\in\{\varepsilon,\varepsilon^3\}$ (see \cite[p. 372]{Ch-Sa14} or \cite[p. 621]{St-Wi88}). Let $\alpha\in\{0,1\}$ be such that $\alpha\equiv y\mod 2$ and let $\beta\in [0,7]$ be such that $\beta\equiv d\mod 8$. Moreover, let $s=|\{p\in\mathbb{P}:d\equiv 0\mod p\}|$ (i.e., $s$ is the number of distinct prime divisors of $d$). It follows from Proposition~\ref{Proposition 2.2} that $d$ satisfies (RC) if and only if $d\mid y$, $\alpha\not=1\not=\beta$ and ${\rm N}(\varepsilon)=1$. Obviously, if $d$ satisfies (C), then $\alpha=1$ (by Lemma~\ref{Lemma 2.1}).

\smallskip
Next we want to briefly discuss two algorithms to find squarefree $d\in\mathbb{N}_{\geq 2}$ with $d\mid y$. The first algorithm is called the {\it small step algorithm}. We use it to determine whether a squarefree integer $d\in\mathbb{N}_{\geq 2}$ satisfies $d\mid y$. The second algorithm is the {\it large step algorithm}. It is utilized to identify the squarefree integers $d\in\mathbb{N}_{\geq 1000000}$ with $d\mid Y$. It is well-known that if $d\mid y$, then $d\mid Y$. Moreover, if $d\mid Y$, then $d\mid 3y$. Also note that if $d\mid Y$ and $d\nmid y$, then $d\equiv 5\mod 8$, $3\mid d$ and $\varepsilon\not\in\mathbb{Z}[\sqrt{d}]$. An example ($d=17451248829$) of this behavior ($d\mid Y$ while $d\nmid y$) is given in \cite[above Remark 5.6]{Re23} and can also be found in the tables below. The large step algorithm is mainly used for search purposes (due to its better time complexity), while the small step algorithm is used for independent verification (and to handle the corner case with $d\mid Y$ and $d\nmid y$ that was mentioned before). For more details on the prior remarks and the algorithms used, we refer to \cite{St-Wi88}. Since the interval $[2,1.5\cdot 10^{12}]$ has already been searched \cite{Re23}, we now focus solely on the squarefree integers $d\geq 1.5\cdot 10^{12}$.

\smallskip
The main purpose of the following part is to present the results of our recent computer search. For this search, we used two implementations of the large step algorithm, a scalar implementation and a (partially) vectorized implementation with AVX-512. The vectorized version (with AVX-512) provides about 40\% more throughput than the scalar version on Zen 4 based CPUs. The programs were written in C and compiled with GCC-12.3.0 (with the compiler flag -O3). As a side note, we only used privately owned hardware for this computer search. We used 162 CPU cores (with hyperthreading and a clock rate around 4.1 GHz on average). Among these CPU cores are 74 cores with AVX-512 support (while the remaining 88 cores support AVX2). We did an exhaustive search on the squarefree integers $d\in [1.5\cdot 10^{12},5.325\cdot 10^{13}]$ (to find those that satisfy $d\mid Y$) and we spent approximately 3500 hours for this search in total.

\smallskip
Despite the fact that we performed an exhaustive search, we do not claim that the newly found numbers (four in total) are all the squarefree integers $d$ with $d\mid Y$ in the search interval. (It is likely that we found all of them.) The main reason is that we have currently not enough available computational resources for an independent double check (of all squarefree integers in the search interval).

\smallskip
Nevertheless, we tested each of the squarefree integers (in the tables) below with our (old and new) implementations of the small step algorithm and the large step algorithm. Furthermore, we used both Mathematica 12.0.0 and Pari/GP 2.15.2 to compute $\alpha,\beta,s,{\rm N}(\varepsilon)$ and ${\rm h}(d)$ in the tables below and to provide independent checks of the squarefree integers involved. Also note that our verifications with Mathematica and Pari/GP did not use the small step algorithm or the large step algorithm. These verifications were done by computing the fundamental unit of $\mathcal{O}_K$ (respectively $\mathbb{Z}[\sqrt{d}]$) in full, by extracting the component $y$ (respectively $Y$) and by using the ``mod operation'' to check whether $d\mid y$ (respectively $d\mid Y$).

\smallskip
It follows from Lemma~\ref{Lemma 2.1} that if $d$ is a squarefree integer of the tables below that satisfies (C), then $d\in\{4099215,39028039587479\}$. If $d=4099215$, then $d$ does not satisfy (SC), since $701\in\mathbb{P}$, $701\mid x$ and $701^2\nmid x$. Moreover, if $d=39028039587479$, then $d$ does not satisfy (SC), since $5\in\mathbb{P}$, $5\mid x$ and $5^2\nmid x$. In particular, none of the squarefree integers $d$ in the tables below satisfies (SC). We do not know if any $d\in\{4099215,39028039587479\}$ satisfies (C). In general, it is difficult to determine whether a specific squarefree $d\in\mathbb{N}_{\geq 2}$ with $d\equiv 7\mod 8$ satisfies (C). (To the best of our knowledge, it is even unknown if $d=7$ satisfies (C).) For more information, we refer to \cite[the paragraphs after Theorem 1 and Corollary 1, p. 126]{Mo87}.

\smallskip
Next we want to present the aforementioned counterexample (which can easily be derived from the tables below). We state it explicitly for the readers' convenience.

\begin{example}[The counterexample to Mordell's Pellian equation conjecture]\label{Example 3.1} Let $d=\linebreak 39028039587479$. Then $d\in\mathbb{P}$, $d\equiv 3\mod 4$ and $d\mid y$.
\end{example}

\begin{table}[htbp]
\centering
\begin{tabular}{|c|c|c|c|c|c|c|c|c|c|c|c|}
\hline
$d$ & $46$ & $430$ & $1817$ & $58254$ & $209991$ & $1752299$ & $3124318$ & $4099215$ & $5374184665$ & $6459560882$ & $16466394154$\\
\hline
$d\mid Y$ & \textnormal{true} & \textnormal{true} & \textnormal{true} & \textnormal{true} & \textnormal{true} & \textnormal{true} & \textnormal{true} & \textnormal{true} & \textnormal{true} & \textnormal{true} & \textnormal{true}\\
\hline
$d\mid y$ & \textnormal{true} & \textnormal{true} & \textnormal{true} & \textnormal{true} & \textnormal{true} & \textnormal{true} & \textnormal{true} & \textnormal{true} & \textnormal{true} & \textnormal{true} & \textnormal{true}\\
\hline
\textnormal{(RC)} & \textnormal{true} & \textnormal{true} & \textnormal{false} & \textnormal{true} & \textnormal{true} & \textnormal{true} & \textnormal{true} & \textnormal{false} & \textnormal{false} & \textnormal{true} & \textnormal{true}\\
\hline
$\alpha$ & $0$ & $0$ & $0$ & $0$ & $0$ & $0$ & $0$ & $1$ & $0$ & $0$ & $0$\\
\hline
$\beta$ & $6$ & $6$ & $1$ & $6$ & $7$ & $3$ & $6$ & $7$ & $1$ & $2$ & $2$\\
\hline
$s$ & $2$ & $3$ & $2$ & $5$ & $2$ & $3$ & $2$ & $3$ & $2$ & $4$ & $4$\\
\hline
${\rm N}(\varepsilon)$ & $1$ & $1$ & $1$ & $1$ & $1$ & $1$ & $1$ & $1$ & $-1$ & $1$ & $1$\\
\hline
${\rm h}(d)$ & $1$ & $2$ & $1$ & $8$ & $2$ & $4$ & $1$ & $4$ & $2$ & $4$ & $32$\\
\hline
\end{tabular}
\end{table}

\clearpage

\begin{table}[htbp]
\centering
\begin{tabular}{|c|c|c|c|c|c|c|}
\hline
$d$ & $17451248829$ & $20565608894$ & $25666082990$ & $117477414815$ & $125854178626$ & $1004569189366$\\
\hline
$d\mid Y$ & \textnormal{true} & \textnormal{true} & \textnormal{true} & \textnormal{true} & \textnormal{true} & \textnormal{true}\\
\hline
$d\mid y$ & \textnormal{false} & \textnormal{true} & \textnormal{true} & \textnormal{true} & \textnormal{true} & \textnormal{true}\\
\hline
\textnormal{(RC)} & \textnormal{false} & \textnormal{true} & \textnormal{true} & \textnormal{true} & \textnormal{true} & \textnormal{true}\\
\hline
$\alpha$ & $1$ & $0$ & $0$ & $0$ & $0$ & $0$\\
\hline
$\beta$ & $5$ & $6$ & $6$ & $7$ & $2$ & $6$\\
\hline
$s$ & $4$ & $3$ & $4$ & $4$ & $4$ & $2$\\
\hline
${\rm N}(\varepsilon)$ & $1$ & $1$ & $1$ & $1$ & $1$ & $1$\\
\hline
${\rm h}(d)$ & $4$ & $2$ & $8$ & $8$ & $8$ & $1$\\
\hline
\end{tabular}
\end{table}

\begin{table}[htbp]
\centering
\begin{tabular}{|c|c|c|c|c|c|}
\hline
$d$ & $1188580642033$ & $15826129757609$ & $18803675974841$ & $20256129307923$ & $39028039587479$\\
\hline
$d\mid Y$ & \textnormal{true} & \textnormal{true} & \textnormal{true} & \textnormal{true} & \textnormal{true}\\
\hline
$d\mid y$ & \textnormal{true} & \textnormal{true} & \textnormal{true} & \textnormal{true} & \textnormal{true}\\
\hline
\textnormal{(RC)} & \textnormal{false} & \textnormal{false} & \textnormal{false} & \textnormal{false} & \textnormal{false}\\
\hline
$\alpha$ & $0$ & $0$ & $0$ & $1$ & $1$\\
\hline
$\beta$ & $1$ & $1$ & $1$ & $3$ & $7$\\
\hline
$s$ & $3$ & $2$ & $3$ & $4$ & $1$\\
\hline
${\rm N}(\varepsilon)$ & $1$ & $1$ & $1$ & $1$ & $1$\\
\hline
${\rm h}(d)$ & $2$ & $1$ & $2$ & $16$ & $1$\\
\hline
\end{tabular}
\end{table}

It is likely that the example above is the smallest counterexample to Mordell's Pellian equation conjecture. (That said, we want to point out again that an independent double check of the search interval is still missing.) Furthermore, we want to emphasize that (to the best of our knowledge) the (AAC)-conjecture and the (EMW)-conjecture are still open. Besides that, it is also unknown (now, as before) whether squarefree integers of type 4/form 1 exist. As a final remark, we were able to write down a proof of Example~\ref{Example 3.1} that can be checked without computer assistance (in an acceptable amount of time). We intend to publish it in due time.

\bigskip
\noindent {\bf ACKNOWLEDGEMENTS.} We would like to thank A. Geroldinger for helpful suggestions and remarks. We also want to thank the anonymous referee for a large variety of corrections and comments that substantially improved the quality of this note.

\end{document}